\title{On Clique Roots of Flat Graphs}
\author{Hossein Teimoori Faal
\\
	Department of Mathematics and Computer Science,\\
Allameh Tabataba'i University, Tehran, Iran\\
}
\date{17 December 2021}
\newtheorem{theorem}{Theorem}[section]
\newtheorem*{theorem A}{Theorem A}
\newtheorem*{theorem B}{N\"olker's Theorem}
\newtheorem{lemma}{Lemma}[section]
\newtheorem{definition}{Definition}[section]
\newtheorem{example}{Example}[section]
\newtheorem{question}{Question}[section]
\newtheorem{conjecture}{Conjecture}
\theoremstyle{remark}
\theoremstyle{remark}
\begin{document}

\maketitle

\abstract{

A complete subgraph of a given graph is called a clique.
A clique Polynomial of a graph is a generating 
function of the number of cliques in $G$. 
A real root of the clique polynomial of a graph $G$ is called a 
\emph{clique root} of $G$. 
\\
Hajiabolhassan and Mehrabadi showed that the clique polynomial 
of any simple graph has a clique root in $[-1,0)$. 
As a generalization  of their result, 
the author of this paper showed that the class of $K_{4}$-free 
connected chordal graphs has also only clique roots. 
\\
A given graph $G$ is called flat if each edge of $G$ belongs to at most two triangles of $G$.  
In answering the author's open 
question about the class of \emph{non-chordal} graphs with 
the same property of having only c;ique roots, we extend the aforementioned result to the class of  
$K_{4}$-free flat graphs. In particular, we prove that the class of $K_{4}$-free flat graphs 
without isolated edges has $r=-1$ as one of it's clique roots. 
We finally present some interesting open questions and conjectures regarding 
clique roots of graphs. 
}





%
%
\section{Introduction}
\label{introduction}

Throughout this paper, we will assume that our graphs are finite, simple and undirected. 
For the definitions which are not mentioned here, readers can refer to the book \cite{BondyMurty82}. 
\\ 
For a given graph $G$, a complete subgraph of $G$ with $i$-vertices ($i \geq 1$)
is called an $i$-clique of $G$. 
The number of $i$-cliques of $G$ will be denoted by $c_{i}(G)$. By convention, 
we define $c_{0}(G)=1$ for any graph $G$. 
The largest clique of $G$ is called 
the \emph{clique number} of $G$ and is denoted by $\omega(G)$. 
The Clique polynomial of a graph $G$ denoted by $C(G,x)$ 
is defined as follows. 

\begin{equation}
C(G,x) = \sum_{k=0}^{\omega(G)} c_{k}(G) x^{k},
\end{equation}
where by convention $c_{0}(G)$ is taken to be $1$ for any simple graph $G$. 
A real root of $C(G,x)$ is called a \emph{clique root} of $G$. 
\\
Hajiabolhassan and late Mehrabadi \cite{HajiMehrab98}
showed that the clique polynomial of any graph $G$ has a 
clique root in $[-1,0)$. Moreover, they showed that 
clique polynomial of triangle-free graphs has only
clique roots. 
In the same line of research the author this paper \cite{Teimoo2018}
showed that clique polynomial of $K_{4}$-free connected chordal graphs 
has also only real roots. 
\\
An interesting class of graphs which are not (necessarily) chordal is 
the class of flat graphs which are those graphs in  which each edge belongs to 
at most two triangles of them. 
In this paper, in answering to a question in the author's previous paper \cite{Teimoo2018} 
about the existence of a nonchordal graph which has the clique polynomial 
with only real roots, we prove that the class of 
flat graphs has only clique roots. 
In particular, we show that the class of flat graphs without isolated edges has always a clique root $r=-1$. 

\section{Basic Definitions and Notations}

In this section, we quickly review clique polynomials and their algebraic properties. 
We also introduce a new class of graphs which are triangulated but they are not (necessarily) chordal.
\\
For a graph $G=(V,E)$, by $G-v$ we mean a graph obtained from $G$ by deleting the vertex $v \in V$. 
We also denote by $G[N_{G}(v)]$ the graph \emph{induced} by the (open) neighborhood of the vertex $v$. 

\subsection{Clique Polynomials}

\begin{definition}
	[see \cite{HajiMehrab98}]	
	For a given graph $G$, the clique polynomial of $G$
	denoted by $C(G,x)$ is defined by
	\begin{equation}
	C(G,x) = 
	1 + \sum_{i=1}^{\omega(G)} c_{i}(G) x^{i}. 
	\end{equation}	
	
\end{definition}

\begin{example}
	The clique polynomial of the graph $G_{1}$ depicted in Figure\ref{fig:M1}, is the following
	$$
	C(G_{1},x) = 1 + 4x + 4x^{2} + x^{3}. 
	$$
	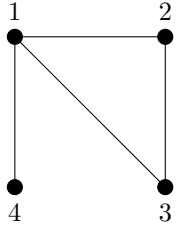
\begin{figure}
		\centering
		\begin{tikzpicture}
		\draw (0,0) -- (0,2) -- (2,2) -- (2,0);
		\draw (0,2)-- (2,0);
		\draw [fill] (0,0) circle [radius=0.1];
		\node [below] at (0,-0.1) {$4$};
		\draw [fill]  (2,0) circle [radius=0.1];
		\node [below] at (2,-0.1) {$3$};
		\draw [fill]  (0,2) circle [radius=0.1];
		\node [above] at (0,2.1) {$1$};
		\draw [fill]  (2,2) circle [radius=0.1];
		\node [above] at (2,2.1) {$2$};
		\end{tikzpicture}
		\caption{The clique polynomial of $G_{1}$ is $C(G_{1},x)= 1 + 4x + 4x^{2} + x^{3}$.}
		\label{fig:M1}
	\end{figure}
	
\end{example}

The following vertex-recurrence relation \cite{HajiMehrab98} can be obtained for a clique polynomial of a graph, 
using a simple counting argument. 

\begin{equation}\label{VerRecu1}
C(G,x)   
= C(G - v , x) + x C(G[N(v)], x) \hspace{0.5cm} (v \in V(G)).
\end{equation}
We also recall the following interesting \emph{vertex-deck} identity for the cliques
of $G$. 
\begin{lemma}
	Let $G=(V,E)$ be a graph on $n$ vertices. Then for any $i\geq 0$, we have
	\begin{equation}\label{CliqDeck1}
	(n-i)c_{i}(G) = 
	\sum_{v \in V}c_{i}(G-v)
	\end{equation}
\end{lemma}
\begin{proof}
	The proof is the straight-forward double-counting argument and left to the interested readers as a simple 
	exercise. 
\end{proof}

Using the vertex-recurrence relation (\ref{VerRecu1}) and the vertex-deck identity (\ref{CliqDeck1}), one can easily prove the following combinatorial 
interpretation of the first derivative of the clique polynomial \cite{LiGut95}. 
For the sake of completeness, we also include the proof. 
\begin{lemma}\label{Derivat1}
	For a graph $G=(V,E)$, we have 
	\begin{equation}\label{derivat1}
	\frac{d}{dx}C(G,x) = \sum_{v \in V}C(G[N(v)], x ).  
	\end{equation}
\end{lemma}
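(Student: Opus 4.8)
The plan is to combine the two identities already established—the vertex-recurrence relation (\ref{VerRecu1}) and the vertex-deck identity (\ref{CliqDeck1})—by summing the recurrence over the whole vertex set and then recognizing the derivative as a purely algebraic consequence. First I would sum (\ref{VerRecu1}) over all $v \in V$. Since the left-hand side does not depend on $v$, this yields
\begin{equation}
n\,C(G,x) = \sum_{v \in V} C(G-v,x) + x \sum_{v \in V} C(G[N(v)],x),
\end{equation}
where $n = |V|$. The term $x\sum_{v} C(G[N(v)],x)$ is exactly $x$ times the quantity we want, so it remains only to put $\sum_v C(G-v,x)$ into closed form.

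Next I would expand $\sum_{v} C(G-v,x)$ coefficient by coefficient. Writing $C(G-v,x) = \sum_{i \geq 0} c_i(G-v)x^i$ and interchanging the two summations gives $\sum_v C(G-v,x) = \sum_{i\geq 0}\bigl(\sum_v c_i(G-v)\bigr)x^i$. Here the vertex-deck identity (\ref{CliqDeck1}) applies termwise, replacing the inner sum by $(n-i)c_i(G)$. Splitting $n-i$ into its two parts then yields
\begin{equation}
\sum_{v \in V} C(G-v,x) = n\sum_{i\geq 0} c_i(G)x^i - \sum_{i \geq 0} i\,c_i(G)x^i = n\,C(G,x) - x\frac{d}{dx}C(G,x),
\end{equation}
where I have used that $\sum_{i} i\,c_i(G)x^i = x\,C'(G,x)$.

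Finally I would substitute this expression back into the summed recurrence. The two $n\,C(G,x)$ terms cancel, leaving $0 = -x\,C'(G,x) + x\sum_v C(G[N(v)],x)$, and cancelling the common factor $x$ gives the claimed identity. The one point requiring care—and the only place a reader might object—is this last cancellation: what the algebra produces is $x\,C'(G,x) = x\sum_v C(G[N(v)],x)$, an equality of polynomials, so since two polynomials agreeing after multiplication by $x$ must already agree, one may drop the factor $x$ without the caveat $x \neq 0$. I expect the bookkeeping in the middle step—correctly identifying $\sum_i i\,c_i(G) x^i$ with $x\,C'(G,x)$ and keeping the index ranges consistent—to be the main (though modest) obstacle; everything else is formal. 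One could alternatively bypass the algebra entirely by noting the combinatorial fact that each $j$-clique of $G[N(v)]$ extends by $v$ to a $(j+1)$-clique of $G$ containing $v$, whence $\sum_v c_j(G[N(v)]) = (j+1)c_{j+1}(G)$, which is precisely the coefficient of $x^j$ in $C'(G,x)$.
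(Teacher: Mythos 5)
Your proposal is correct and is essentially the paper's own proof: both rest on exactly the same two ingredients, the vertex-deck identity converted into the polynomial identity $\sum_{v}C(G-v,x) = n\,C(G,x) - x\frac{d}{dx}C(G,x)$ and the vertex-recurrence relation (\ref{VerRecu1}), differing only in the order of combination (you sum the recurrence over $v$ first and then substitute, while the paper derives $\frac{d}{dx}C(G,x) = \sum_{v}\bigl(C(G,x)-C(G-v,x)\bigr)/x$ and then applies the recurrence). Your two refinements---justifying the cancellation of the factor $x$ as an identity of polynomials, and the closing combinatorial observation $\sum_{v}c_{j}(G[N(v)]) = (j+1)c_{j+1}(G)$, which by itself gives a shorter, self-contained proof---are both sound, but the core argument coincides with the paper's.
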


\begin{proof}

	By multiplying both sides of (\ref{CliqDeck1}) by 
	$x^{i}$ and summing over all $i$, we get
	
	\begin{equation}
	\sum_{i \geq 0}(n - i) c_{i}(G)x^{i}
	= \sum_{i \geq 0}\sum_{v \in V}c_{i}(G - v)x^{i},\nonumber
	\end{equation}
	or equivalently (by changing summation order), we have 
	\begin{equation}\label{equtt3}
	n\sum_{i \geq 0}c_{i}(G)x^{i} - x \sum_{i \geq 1}ic_{i}(G)x^{i-1} 
	= \sum_{v \in V}\left( \sum_{i \geq 0}c_{i}(G - v)x^{i} \right). 
	\end{equation}
On the other hand, 
by the definition of the first derivative of any polynomial we obtain
	
\begin{equation}\label{equtt4}
	\frac{d}{dx}C(G,x) = \sum_{i \geq 1}ic_{i}(G)x^{i-1} 
	. 
\end{equation}

Hence, considering the relation (\ref{equtt4}), we can rewrite the equality
	(\ref{equtt3})
	as follow
	\begin{equation}
	n . C(G,x) - x \frac{d}{dx}C(G,x)   
	= \sum_{v \in V}C(G - v, x)\nonumber. 
	\end{equation}
	or equivalently, 
	\begin{equation}\label{equtt5}
	\frac{d}{dx}C(G,x)   
	= \sum_{v \in V}\frac{C(G , x) - C(G - v, x)}{x}.
	\end{equation}
Finally based on the relations (\ref{equtt5})
and vertex-recurrence formula \ref{VerRecu1}, 
we get the desired result. 
	
\end{proof}

\begin{proof}
	By multiplying both sides of (\ref{CliqDeck1}) by 
	$x^{i}$ and summing over all $i$, we get
	
	\begin{equation}
	\sum_{i \geq 0}(n - i) c_{i}(G)x^{i}
	= \sum_{i \geq 0}\sum_{v \in V}c_{i}(G - v)x^{i},\nonumber
	\end{equation}
	or equivalently (by changing summation order), we have 
	\begin{equation}\label{equt3}
	n\sum_{i \geq 0}c_{i}(G)x^{i} - x \sum_{i \geq 1}ic_{i}(G)x^{i-1} 
	= \sum_{v \in V}\left( \sum_{i \geq 0}c_{i}(G - v)x^{i} \right). 
	\end{equation}
	On the other hand, 
	by the definition of the first derivative of any polynomial we obtain
	
	\begin{equation}\label{equt4}
	\frac{d}{dx}C(G,x) = \sum_{i \geq 1}ic_{i}(G)x^{i-1} 
	. 
	\end{equation}
	Hence, considering the relation (\ref{equt4}), we can rewrite the equality
	(\ref{equt3})
	as follow
	\begin{equation}
	n . C(G,x) - x \frac{d}{dx}C(G,x)   
	= \sum_{v \in V}C(G - v, x)\nonumber. 
	\end{equation}
	or equivalently, 
	\begin{equation}\label{equt5}
	\frac{d}{dx}C(G,x)   
	= \sum_{v \in V}\frac{C(G , x) - C(G - v, x)}{x}.
	\end{equation}
	Finally based on the relations (\ref{equt5})
	and vertex-recurrence formula \ref{VerRecu1}, 
	we get the desired result. 
	
\end{proof}

\subsection{Flat Graphs}

Next, we introduce a new class of graphs which can be of special interest for our purposes. They are 
triangulated graphs but they are not (necessarily) chordal. 
We call them flat graphs \cite{HaxelKostThomasse2012}.

\begin{definition}
	
	A graph $G$ is called \emph{flat} if every edge of $G$ belongs to at most 
	two triangles of $G$. 
	
\end{definition}

\begin{example}
	
	The wheel $W_{5}$ is a simple example of a flat graph which is clearly not a chordal graph. 
	
\end{example}

For the sequel purposes, we need the following generalization of the concept 
of a \emph{degree} of a vertex. 

\begin{definition}
	[see \cite{Teimoo2020}]	
	Let $G=(V,E)$ be a 
	graph and $e=\{u,v\} \in E$ be an edge of $G$. The value of $e$ in $G$ denoted by $val_{G}(e)$ is defined as follows:
	$$
	val_{G}(e) = N_{G}(u) \cap N_{G}(v). 
	$$
	An edge $e \in E$ with $val_{G}(e) = 0$ is called an \emph{isolated} edge of $G$. 
	
\end{definition}

\section{Main Results}

In  this section, we are going to prove the similar results of the author's paper
\cite{Teimoo2018}. The basic idea is that if the derivative of a cubic polynomial $p(x)$ has (at least) one real root,
then it has only real roots. 
Before going through the proof, we recall the well-known Euler identity for 
\emph{planar} graphs.

\begin{lemma}\label{Euler1}
	For a planar graph $G$ with $n$ vertices, $m$ edges and $t$ faces, we have 
	\begin{equation}
	n-m+f=2.
	\end{equation}
	In particular, for triangulated graph $f=t+1$, where $t$ denotes the number of triangles. 
\end{lemma}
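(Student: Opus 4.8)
The plan is to establish the two assertions separately: first the classical Euler relation $n-m+f=2$ for a connected planar graph (I take $G$ connected, as this form of the identity requires; for $c$ components the right-hand side becomes $1+c$), and then the specialization $f=t+1$ in the triangulated case. For the first part I would induct on the number of edges $m$, holding the vertex set and a fixed plane embedding fixed throughout. In the base case, a connected acyclic graph is a spanning tree, so $m=n-1$ and the embedding has only the unbounded face, giving $f=1$ and $n-(n-1)+1=2$. For the inductive step, if $G$ contains a cycle I choose an edge $e$ lying on some cycle; in the embedding $e$ borders two \emph{distinct} faces, so deleting $e$ merges them into a single face. Thus passing from $G$ to $G-e$ leaves $n$ unchanged, decreases $m$ by one and decreases $f$ by one, so the alternating sum $n-m+f$ is invariant, and the claim follows from the inductive hypothesis applied to $G-e$.

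For the second part I would read the hypothesis ``triangulated'' as asserting that every bounded face of the embedding is bounded by a triangle and, conversely, that every triangle of $G$ bounds a face. Under this reading I would set up a bijection between the set of triangles of $G$ and the set of bounded faces: each bounded face determines a unique triangle (its boundary), and each triangle, being facial, encloses exactly one bounded region, with distinct triangles enclosing distinct regions. Counting all faces as the $t$ bounded (triangular) faces together with the single unbounded outer face then yields $f=t+1$.

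The main obstacle is precisely this bijection in the second part. One must rule out \emph{separating triangles}—a $3$-cycle enclosing further vertices or edges, which is a triangle of the graph but not a facial triangle—as well as bounded faces that are not triangles; these are exactly the configurations excluded by the triangulated hypothesis, so the real work is making that hypothesis explicit and checking it forbids both failure modes. A secondary technical point, needed for the inductive step of the first part, is the claim that an edge on a cycle borders two distinct faces, so that its deletion strictly decreases the face count; I would justify this by appealing to the Jordan curve theorem for the embedded cycle, which guarantees an inside and an outside region separated by $e$.
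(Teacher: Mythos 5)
The paper never actually proves this lemma: it is recalled as ``the well-known Euler identity'' and then used immediately in Theorem \ref{Mainthm1}, so there is no in-paper argument to compare yours against --- your proposal supplies a proof where the paper has none. Your first part is the standard, correct edge-induction proof of Euler's formula, and you are right to add the connectivity hypothesis that the statement omits (for $c$ components the identity reads $n-m+f=1+c$); nothing is lost, since the paper's application assumes $G$ connected. Your second part is also correct, but note that it is correct essentially by fiat: you read ``triangulated'' as the conjunction of ``every bounded face is a triangle'' and ``every triangle of $G$ bounds a bounded face'' (no separating triangles), and under that reading the bijection between triangles and bounded faces gives $f=t+1$ immediately. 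That is indeed the only reading under which the claim is true, so your proof is as strong as the statement allows.

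The value of your care with the hypothesis is that it pinpoints exactly where the paper's subsequent use of the lemma is unjustified. Theorem \ref{Mainthm1} invokes the lemma for an arbitrary $K_4$-free flat graph without isolated edges to conclude $C(G,-1)=0$, but such graphs need not be ``triangulated'' in your sense. For example, take a hexagon $v_1v_2\cdots v_6$ and, for each edge $v_iv_{i+1}$, attach a new vertex $u_i$ adjacent to both $v_i$ and $v_{i+1}$. Every edge lies in exactly one triangle, so this graph is flat, $K_4$-free, connected, planar, and has no isolated edges; yet the inner hexagonal face is not a triangle, and indeed $n=12$, $m=18$, $t=6$ gives $C(G,-1)=1-12+18-6=1\neq 0$. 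So your proof is sound, and the explicit hypothesis you isolate in the second part is not pedantry: it is precisely the condition the paper's graphs can fail, which the lemma's vague phrasing conceals.
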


We first state a simpler version of our main result. From now on, we 
will call a real root of $C(G,x)$ a \emph{clique root} of $G$. 

\begin{theorem}\label{Mainthm1}
	The class of $K_{4}$-free flat graphs without isolated edges 
	has only clique roots. In particular, this 
	class has always the clique root $r=-1$. 
\end{theorem}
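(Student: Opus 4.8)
The plan is to understand the structure of $C(G,x)$ for a $K_4$-free flat graph.The plan is to exploit the fact that $K_{4}$-freeness collapses the clique polynomial to a cubic. Since $\omega(G)\le 3$, writing $n=c_{1}(G)$, $m=c_{2}(G)$ and $t=c_{3}(G)$ for the numbers of vertices, edges and triangles, we have
$$C(G,x)=1+nx+mx^{2}+tx^{3},$$
so proving that $G$ has only clique roots means showing this cubic has three real roots. First I would record the structural consequence of flatness that makes the neighbourhoods tractable: for every vertex $v$ the induced graph $G[N(v)]$ is triangle-free, since a triangle inside $N(v)$ together with $v$ would form a $K_{4}$. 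Hence, by Hajiabolhassan--Mehrabadi, each $C(G[N(v)],x)=1+d(v)x+e_{v}x^{2}$ is real-rooted, and summing these through Lemma \ref{Derivat1} recovers $C'(G,x)=n+2mx+3tx^{2}$, the link to the paper's derivative heuristic.

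Next I would locate the advertised root $r=-1$. Evaluating gives $C(G,-1)=1-n+m-t$, so $r=-1$ is a clique root precisely when $n-m+t=1$. This is where Euler's identity (Lemma \ref{Euler1}) enters: realizing $G$ as a planar triangulation whose bounded faces are exactly the $t$ triangles, with a single remaining outer face, the relation $f=t+1$ together with $n-m+f=2$ yields $n-m+t=1$ at once. Consequently $(x+1)$ divides $C(G,x)$, and polynomial division gives the clean factorisation
$$C(G,x)=(x+1)\bigl(t\,x^{2}+(m-t)x+1\bigr).$$
It then remains only to show that the quadratic factor $t\,x^{2}+(m-t)x+1$ is real-rooted, i.e. that its discriminant $(m-t)^{2}-4t$ is nonnegative; since the Euler relation also gives $m-t=n-1$, this is the inequality $(n-1)^{2}\ge 4t$.

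The step I expect to be the main obstacle is precisely this discriminant inequality, together with the justification that Euler's formula applies at all. Flat graphs are not manifestly planar, so the delicate point is to argue that a $K_{4}$-free flat graph without isolated edges carries the near-triangulation structure ($f=t+1$, every edge in one or two triangles) that Lemma \ref{Euler1} requires; \emph{this is where I expect the real work to lie}. I would then translate the hypotheses into the bound $(n-1)^{2}\ge 4t$ by counting triangle--edge incidences under flatness (at most two triangles per edge) and using $K_{4}$-freeness to forbid an interior vertex joined to all three corners of a face, which forces interior vertices to have degree at least four and so limits how many triangles can accumulate on a fixed vertex set. This is also the place where the hypotheses \emph{without isolated edges} and connectedness are genuinely needed: an isolated edge or an extra component shifts the count $n-m+t$ away from $1$ (so that $-1$ ceases to be a root) and disturbs the triangle--vertex balance. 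Once the discriminant inequality is secured, both roots of the quadratic factor are real (indeed negative, as their product is $1/t>0$ and their sum is $-(n-1)/t\le 0$), and combined with the root $r=-1$ the theorem follows.
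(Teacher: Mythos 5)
Your first half coincides with the paper's: both arguments derive planarity from flatness, invoke Euler's identity (Lemma \ref{Euler1}) with $f=t+1$ to get $n-m+t=1$, and conclude $C(G,-1)=0$, which in your version becomes the factorisation $C(G,x)=(x+1)\bigl(tx^{2}+(m-t)x+1\bigr)$. The problem is the second half: your entire proof now rests on the discriminant inequality $(m-t)^{2}-4t=(n-1)^{2}-4t\ge 0$, which you correctly flag as ``where the real work lies'' --- and then do not prove. The sketch you offer (triangle--edge incidences, interior vertices of degree at least four) is not an argument, and the counting it suggests falls short: flatness gives $3t\le 2m$, which combined with $t=m-n+1$ yields only $t\le 2(n-1)$, and $4t\le 8(n-1)\le(n-1)^{2}$ requires $n\ge 9$. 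The inequality is in fact true and closable --- planarity gives $m\le 3n-6$, hence $t\le 2n-5$ and $(n-1)^{2}-4t\ge n^{2}-10n+21=(n-3)(n-7)\ge 0$ for all $n\ge 7$, with the remaining small values of $n$ checkable by Tur\'{a}n-type bounds --- but none of this appears in your proposal, so its central step is missing. (Your worry about why flatness forces planarity is also left unresolved, though there the paper is equally brief: it asserts a flat graph contains no $K_{3,3}$ subgraph or minor and cites Kuratowski.)

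The paper closes the argument by a genuinely different mechanism that never touches the discriminant. It uses Lemma \ref{Derivat1}, $\frac{d}{dx}C(G,x)=\sum_{v\in V}C(G[N_{G}(v)],x)$, together with the structural fact that in a flat graph without isolated edges every neighborhood $G[N_{G}(v)]$ induces a path or a cycle; the corresponding polynomials $(1+x)(1+l_{i}x)$ and $1+c_{j}x+c_{j}x^{2}$ are each nonpositive at $x=-\tfrac12$ (using $l_{i}\ge 2$, $c_{j}\ge 4$), whence $\frac{d}{dx}C(G,-\tfrac12)\le 0$, and the intermediate value theorem produces a real critical point, which the paper combines with $C(G,-1)=0$ to conclude real-rootedness. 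So the hypothesis ``without isolated edges'' is used structurally there (to control neighborhoods), not merely arithmetically as in your proposal. Note that your reduction, where it is complete, is arguably tighter than the paper's closing step --- a cubic can vanish at $-1$ and have a real critical point without being real-rooted, whereas real roots of your quadratic factor genuinely are equivalent to the theorem --- but as submitted your proof is unfinished at exactly the point where all the graph theory has to happen.
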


\begin{proof}
	Since $G$ is $K_{4}$-free connected graph, we have
	\[
	C(G,x) = 1 + nx + mx^{2} + tx^3. 
	\]
	Now, we note that by the definition of flat graphs, the graph $G$ can not contain 
	any copy of $K_{3,3}$ as a subgraph (or as a minor in general). Hence by the Kuratowski's well-known theorem, 
	$G$ is a planar graph. 
	Therefore, based on the Euler identity\ref{Euler1}, we conclude that
	$C(G,-1)=0$. Hence, to complete the proof we only need to show that the polynomial 
	$
	\frac{d}{dx}C(G,x)
	$
	has (at least) one real root. 
	Next, we note that for a flat graph without isolated edges, the neighborhood of 
	any vertex $v \in V(G)$ is a path or a cycle. Let $\{v_{i}\}^{k_1}_{i=1}$ be the set of vertices
	such that each $G[N_{G}(v_{i})]$ is a path of length $l_{i}$ and $\{u_{j}\}^{k_2}_{j=1}$ is the set 
	of vertices such that each  $G[N_{G}(u_{j})]$ is a cycle of length $c_{j}$. 
	\\
	Now, considering Proposition \ref{Derivat1}, we get 
	\begin{eqnarray}\label{Derivat2}
	\frac{d}{dx}C(G,x) & = & 
	\sum_{v \in V}C(G[N_{G}(v)],x), \nonumber\\
	& = & \sum^{k_{1}}_{i=1}C(G[N_{G}(v_{i})],x) + \sum^{k_{2}}_{j=1}C(G[N_{G}(v_{j}),x]), \nonumber\\
	& = &  \sum^{k_{1}}_{i=1}(1+x)(1+ l_{i} x) + \sum^{k_{2}}_{j=1} (1+ c_{j} x + c_{j} x^{2} ). \nonumber 
	\end{eqnarray}
	Considering the fact that each $l_{i} \geq 2$ and $c_{j} \geq 4$, we immediately conclude that 
	\begin{equation}\label{keyineq1}
	\frac{d}{dx}C(G,-\frac{1}{2}) \leq 0.
	\end{equation}
	
	Finally, the result is immediate considering the fact that $C(G,0) > 0$, the equality \ref{keyineq1} and the \emph{intermediate value theorem} from calculus. 	
\end{proof}

Next, based on the vertex-recurrence formula 
\ref{VerRecu1} and some slight modifications of Theorem \ref{Mainthm1}, we can get the following interesting result. 

\begin{theorem}\label{Mainthm2}
	The class of $K_{4}$-free flat graphs in which no isolated edge lies in any induced cycle of $G$ 
	has only clique roots. In particular, this 
	class has always a clique root $r=-1$.
\end{theorem}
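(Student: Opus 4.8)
The plan is to reduce Theorem \ref{Mainthm2} to Theorem \ref{Mainthm1} by an induction on the number of isolated edges, removing them one at a time. First I would record the structural meaning of the hypothesis. A shortest cycle through a fixed edge is always chordless (a chord would split it into a shorter cycle still carrying the edge), so an edge lies in an induced cycle if and only if it lies in some cycle, i.e.\ if and only if it is not a bridge. Conversely a bridge lies in no cycle and hence in no triangle, so every bridge is automatically an isolated edge. Therefore the assumption that no isolated edge lies in an induced cycle says exactly that the isolated edges of $G$ are precisely its bridges. As in Theorem \ref{Mainthm1} I may take $G$ connected and $K_{4}$-free, so $C(G,x)=1+nx+mx^{2}+tx^{3}$.

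If $G$ has no isolated edge, then Theorem \ref{Mainthm1} applies directly; the only other connected possibility here is a single vertex (any edge of a graph with no isolated edge lies in a triangle), which is trivial. Otherwise I pick an isolated edge $e=\{u,v\}$, which by the above is a bridge, so the edge-deleted graph $G\setminus e$ splits into two connected graphs $A\ni u$ and $B\ni v$. Because $e$ lies in no triangle, the only clique of $G$ using the adjacency $uv$ is $e$ itself, and every triangle of $G$ meeting $A$ lies inside $A$ (similarly for $B$); a counting argument in the spirit of the vertex-recurrence \ref{VerRecu1} then gives $C(G,x)=C(A,x)+C(B,x)-1+x^{2}$, whose coefficients correctly split $n=n_{A}+n_{B}$, $m=m_{A}+m_{B}+1$, $t=t_{A}+t_{B}$. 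Each of $A,B$ is again $K_{4}$-free and flat, and an edge of $A$ lies in no triangle of $A$ exactly when it lies in no triangle of $G$, hence is a bridge of $G$ and so of $A$; thus $A$ and $B$ inherit the hypothesis while having strictly fewer isolated edges. By induction $C(A,-1)=C(B,-1)=0$, whence $C(G,-1)=0+0-1+(-1)^{2}=0$, which already establishes the ``in particular'' claim.

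It remains to see that all roots are real. Using $C(G,-1)=0$ I factor $C(G,x)=(x+1)\bigl(t\,x^{2}+(n-1)x+1\bigr)$, so it suffices to prove the discriminant inequality $(n-1)^{2}\ge 4t$. Here the induction hypothesis supplies $(n_{A}-1)^{2}\ge 4t_{A}$ and $(n_{B}-1)^{2}\ge 4t_{B}$. The main obstacle is that real-rootedness is \emph{not} preserved by the sum $C(A,x)+C(B,x)$ in general; the fact that carries it across the gluing is the elementary inequality $(a+b+1)^{2}\ge a^{2}+b^{2}$ for $a,b\ge 0$. Applying it with $a=n_{A}-1\ge 0$ and $b=n_{B}-1\ge 0$ gives $(n-1)^{2}=(n_{A}+n_{B}-1)^{2}\ge (n_{A}-1)^{2}+(n_{B}-1)^{2}\ge 4t_{A}+4t_{B}=4t$, as required, so the quadratic factor has real roots and $C(G,x)$ has only clique roots.

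I expect the delicate points to be the two bookkeeping claims — that cutting a bridge genuinely yields $C(G,x)=C(A,x)+C(B,x)-1+x^{2}$, and that $A$ and $B$ inherit ``isolated edges are bridges'' — since these are what let the induction run, together with the discriminant inequality that transports real-rootedness through each gluing. All degenerate cases (a pendant leaf giving $A$ a single vertex, or a triangle-free branch which under the hypothesis is forced to be a tree) fall under the single-vertex or Theorem \ref{Mainthm1} base cases, so the induction terminates correctly.
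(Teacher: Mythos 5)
Your proof is correct, and it takes a genuinely different route from the paper, which in fact offers no written proof of Theorem \ref{Mainthm2} at all: the paper only remarks that the result follows from the vertex-recurrence formula (\ref{VerRecu1}) and ``slight modifications'' of Theorem \ref{Mainthm1}, i.e.\ the same derivative-plus-intermediate-value-theorem device (classify vertex neighborhoods, show $\frac{d}{dx}C(G,-\tfrac{1}{2})\le 0$) that reappears in the proof of Theorem \ref{MainThm}. You instead first translate the hypothesis into structure --- your observation that a shortest cycle through an edge is chordless, so ``no isolated edge lies in an induced cycle'' says exactly ``isolated edges are bridges,'' is the right reading and is nowhere made explicit in the paper --- and then run an induction on the number of isolated edges: cut a bridge, verify the gluing identity $C(G,x)=C(A,x)+C(B,x)-1+x^{2}$, deduce $C(G,-1)=0$, factor $C(G,x)=(x+1)\bigl(tx^{2}+(n-1)x+1\bigr)$, and transport real-rootedness across the gluing via the discriminant inequality $(n-1)^{2}\ge 4t$, proved from $(a+b+1)^{2}\ge a^{2}+b^{2}$. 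Your explicit recognition that sums of real-rooted polynomials need not be real-rooted --- the precise difficulty the paper's one-line remark glosses over --- is the heart of the matter, and all the bookkeeping (inheritance of $K_{4}$-freeness, flatness, and the bridge hypothesis by $A$ and $B$; strict decrease in isolated edges; the degenerate single-vertex and tree cases) checks out. What your route buys is a complete, self-contained argument (with Theorem \ref{Mainthm1} only as a base case) plus a quantitative criterion: for connected $K_{4}$-free graphs with $C(G,-1)=0$, real-rootedness is equivalent to $(n-1)^{2}\ge 4t$. What the paper's sketched route buys is uniformity: the derivative/IVT argument needs no gluing lemma and extends to Theorem \ref{MainThm}, where the root $r=-1$ is no longer available.

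One caveat, which you share with the paper rather than introduce: read verbatim for disconnected graphs, the statement is false. The graph $2K_{2}$ satisfies every hypothesis, yet $C(2K_{2},x)=1+4x+2x^{2}$ has $C(2K_{2},-1)=-1\neq 0$; worse, $2K_{3}$ also qualifies (it has no isolated edges at all) and $C(2K_{3},x)=1+6x+6x^{2}+2x^{3}$ has derivative $6(x+1)^{2}\ge 0$, hence two non-real roots. So ``I may take $G$ connected'' is not a harmless reduction but the only tenable reading of the theorem --- the same implicit assumption the paper makes when its proof of Theorem \ref{Mainthm1} opens with ``Since $G$ is $K_{4}$-free connected graph.'' In your write-up, state connectivity as an explicit hypothesis rather than as a reduction.
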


Now, we are in position to state and prove the main result of this paper which is stronger than the  
previous results. 

\begin{theorem}\label{MainThm}
	Let $G$ be a $K_{4}$-free flat graph. Then, the graph $G$
	has only clique roots. 
\end{theorem}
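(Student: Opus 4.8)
The plan is to follow the scheme of Theorem \ref{Mainthm1}, but without the two crutches that the restricted versions enjoyed: I can no longer assume that $-1$ is a root (the Euler/planarity route of Lemma \ref{Euler1} is unavailable once isolated edges are allowed, and indeed $K_{4}$-free flat graphs need not be planar) nor that each neighbourhood induces a single path or cycle. First I would dispose of the degenerate case: if $t=0$ the graph is triangle-free and real-rootedness is already due to Hajiabolhassan and Mehrabadi, so I may assume $t\ge 1$ and that $C(G,x)=1+nx+mx^{2}+tx^{3}$ is a genuine cubic with positive coefficients. Then every real root is negative, and since $C(G,0)=1>0$ while $C(G,x)\to-\infty$ as $x\to-\infty$, it suffices to produce two points $\gamma_{1}<\gamma_{2}<0$ with $C(G,\gamma_{1})>0>C(G,\gamma_{2})$: this forces the sign pattern $-,+,-,+$ and hence three real roots.

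The engine remains Lemma \ref{Derivat1}, which gives $\frac{d}{dx}C(G,x)=\sum_{v\in V}C(G[N(v)],x)$. Here I would isolate the structural fact underlying everything: flatness forces each $u\in N(v)$ to have at most two neighbours inside $N(v)$, while $K_{4}$-freeness makes $G[N(v)]$ triangle-free, so every $G[N(v)]$ is a disjoint union of paths and cycles, the cycles having length at least $4$. Using additivity of the clique polynomial on disjoint unions (a neighbourhood with $k$ components contributes the sum of the component values minus $k-1$, which only lowers it), I would evaluate each piece at $x=-\tfrac12$: a path with $a$ edges contributes $\tfrac12-\tfrac a4$, a cycle on $b\ge 4$ vertices contributes $1-\tfrac b4\le 0$, whereas the \emph{short} components, an isolated vertex ($a=0$) and a single edge ($a=1$), contribute the positive values $\tfrac12$ and $\tfrac14$.

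The main obstacle is exactly these positive contributions. Isolated edges of $G$ appear as isolated vertices in the neighbourhoods of their endpoints, and low-degree vertices whose two neighbours are adjacent appear as single-edge neighbourhoods; such terms can push $\frac{d}{dx}C(G,-\tfrac12)$ above $0$, so the single inequality that closed Theorem \ref{Mainthm1} is simply not available here. (The triangle $K_{3}$, with $C(K_{3},x)=(1+x)^{3}$ and $\frac{d}{dx}C(K_{3},-\tfrac12)=\tfrac34>0$, already shows that the naive inequality fails even though the roots are real.) I would attack this by induction on the number of isolated edges through the vertex-recurrence $C(G,x)=C(G-v,x)+xC(G[N(v)],x)$: taking $v$ to be an endpoint of an isolated edge, I would verify that $G-v$ stays $K_{4}$-free and flat, that the perturbation $xC(G[N(v)],x)$ is itself real-rooted (its neighbourhood is again a union of paths and cycles), and that the two contributions interlace so that real-rootedness is inherited, with base case the isolated-edge-free situation settled in Theorems \ref{Mainthm1} and \ref{Mainthm2}.

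The delicate point, where I expect the real work to lie, is that for a cubic with positive coefficients knowing merely that $\frac{d}{dx}C(G,x)$ has a real root is \emph{not} enough: one must certify separately that the local maximum of $C(G,x)$ is nonnegative and its local minimum nonpositive. I would obtain the local-minimum condition from a point $\gamma_{2}$ with $C(G,\gamma_{2})\le 0$, supplied in the planar triangulated case by $C(G,-1)=0$ via Lemma \ref{Euler1} and otherwise by the inductive step, and the local-maximum condition from positivity of $C(G,x)$ at a suitable $\gamma_{1}$ to its left. Showing that such a pair $\gamma_{1}<\gamma_{2}$ can always be located for an \emph{arbitrary} $K_{4}$-free flat graph, isolated edges and non-planarity included, is the crux; the rest is the intermediate value theorem exactly as in Theorem \ref{Mainthm1}.
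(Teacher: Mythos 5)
Your machinery is the paper's own: Lemma \ref{Derivat1}, the observation that $K_{4}$-freeness plus flatness force each $G[N(v)]$ to be a disjoint union of paths and cycles of length at least four, evaluation at $x=-\tfrac12$, and the intermediate value theorem. Two of your side remarks are not only correct but sharper than the paper: the inequality $\frac{d}{dx}C(G,-\tfrac12)\leq 0$ genuinely fails (your $K_{3}$ example, or two triangles sharing a vertex, where the sum of the values $\tfrac14$ from single-edge neighbourhoods wins; the paper's claim that every path-neighbourhood has length $l_{i}\geq 2$ is simply false for these graphs), and a cubic whose derivative has a real root need not have three real roots, so the paper's guiding principle is itself insufficient. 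Your sign-pattern criterion ($\gamma_{1}<\gamma_{2}<0$ with $C(G,\gamma_{1})>0>C(G,\gamma_{2})$) is the right test.

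However, your repair is not a proof, and it cannot be completed as stated. The inductive step --- that $C(G-v,x)$ and $xC(G[N(v)],x)$ ``interlace so that real-rootedness is inherited'' --- is an assertion with no argument behind it: sums of real-rooted polynomials are not real-rooted in general, and no common interlacer is exhibited. Worse, the induction hypothesis itself is unavailable. Take $G$ to be two triangles joined by an edge $uv$: then $G$ is connected, flat, $K_{4}$-free, with $C(G,x)=(1+x)(2x^{2}+5x+1)$ real-rooted; but deleting the endpoint $v$ of the isolated edge gives $G-v=K_{3}\sqcup K_{2}$, whose clique polynomial $x^{3}+4x^{2}+5x+1$ has local maximum value $C(-\tfrac53)=-\tfrac{23}{27}<0$ and hence only one real root. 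So the recurrence (\ref{VerRecu1}) cannot be fed a real-rooted $C(G-v,x)$. The same phenomenon shows that the ``crux'' you defer is unfillable at the stated level of generality: $2K_{3}$, the disjoint union of two triangles, is $K_{4}$-free and flat, yet $C(2K_{3},x)=1+6x+6x^{2}+2x^{3}=2(1+x)^{3}-1$ is monotone on $\mathbb{R}$ and has a single real root, so no pair $\gamma_{1}<\gamma_{2}$ with your sign pattern exists; note its derivative $6(1+x)^{2}$ does have a real root, which also refutes the paper's basic idea inside the hypothesis class. The missing ingredient is therefore not a better evaluation point but an essential use of connectivity (for instance, establishing $C(G,-1)\leq 0$, i.e.\ $t\geq m-n+1$, for connected flat graphs containing a triangle); your induction exits the connected class at its very first step, and the paper's proof of Theorem \ref{MainThm} never invokes connectivity either, so this gap is common to both --- but it is a gap, and your proposal does not close it.
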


\begin{proof}
	We partition the vertex set $V(G)$ 
	into three (disjoint) sets $V_{1}$, $V_{2}$ and $V_{3}$. 
	$V_{1}$ is the set vertices $v$ such that $G[N_{G}(v)]$ is 
	a path. $V_{2}$ is the set of those vertices $u$ such that $G[N_{G}(u)]$
	is a cycle. $V_{3}$ is the set of vertices $w$ such that 
	$G[N_{G}(w)]$ is a collection of paths and isolated vertices (paths of length zero). 
	\\
	In this case, we do not have necessarily a clique root $r=-1$ but we still have a real 
	root because $C(G,x)$ is a cubic polynomial. Thus, again we need to prove that 
	$
	\frac{d}{dx}C(G,x)
	$ 
	has (at least) one real root. 
	\begin{eqnarray}\label{Derivat3}
	\frac{d}{dx}C(G,x) & = & 
	\sum_{v \in V}C(G[N_{G}(v)],x), \nonumber\\
	& = & \sum_{v \in V_{1}}C(G[N_{G}(v)],x) + \sum_{u \in V_{2}}C(G[N_{G}(u)],x) + \sum_{w \in V_{3}}C(G[N_{G}(w)],x) , \nonumber\\
	& = &  \sum_{v \in V_{1}}(1+x)(1+ l_{v} x) + \sum_{u \in V_{2}} (1+ c_{u} x + c_{u} x^{2} ) \nonumber \\
	& + & \sum_{w \in V_{3}}\Big((1+x)(1+l_{w}x) + (1+k_{w}x) -k_{w}\Big), \nonumber 
	\end{eqnarray}
	where the last line is based on the clique polynomial formula for the disjoint union of two graphs. 
\end{proof}

Once again, considering the fact that each $l_{i} \geq 2$, $c_{j} \geq 4$ and $k_{w} \geq 1$ 
it is not hard to prove that 
$$
\frac{d}{dx}C(G,-\frac{1}{2}) \leq 0,
$$
which again by the intermediate value theorem implies that $\frac{d}{dx}C(G,x)$ has at least one real root. 
This finishes the proof.

\section{Open Questions and Conjectures}

In this final section, we propose several interesting 
open questions and conjectures regarding the clique roots of graphs. 
\\
In the paper \cite{Teimoo2018}, the author gives a proof of the real rootedness of 
the clique polynomial of $K_{4}$-free chordal graphs mainly 
based on the idea of the \emph{breadth first search} algorithm. 
Therefore, the following question is of our special interest. 

\begin{question}
	
	Can we give an algorithmic proof of Theorem \ref{MainThm}?

\end{question}

It is also interesting to know that whether Theorem \ref{MainThm} is true
for the class of $K_{5}$-free graphs or not. Here is a counter-example. 
Let $G$ be a the graph $K^{+}_{4}$ which is a complete graph with an isolated edge 
attached to one of its vertices. Hence, we have 
$$
C(K^{+}_{4},x) = (1 + x)^{4} + (1+x)^{2} - 1. 
$$  
Therefore, 
$
\frac{d}{dx}C(K^{+}_{4},x) = 4(1+x)^{3} + 2(1+x)
$
and this implies that
$
\frac{d^{2}}{dx^{2}}C(K^{+}_{4},x) = 12(1+x)^{2} + 2 >0.
$
Thus, we immediately conclude that not all roots of 
$
C(K^{+}_{4}, x)
$
are real. 
The following open question naturally arises. 

\begin{question} 
	Which subclasses of flat graphs have only clique roots? 
\end{question} 

We first note that a generalization of the edge value of an edge $e \in E(G)$
of a graph $G$, which we call it \emph{triangle value} \cite{Teimoo2020} can be defined as follows. 

\begin{definition}
	For a triangle $\delta = \{u,v,w\} \subset V(G)$,
	the value of $\delta$ in $G$ denoted by $val_{G}(\delta)$ is defined by 
	$$
	val_{G}(\delta) = N_{G}(u) \cap N_{G}(v) \cap N_{G}(w). 
	$$
	Moreover, a triangle is called \emph{isolated} if its triangle value is zero. 
\end{definition} 

In analogy with Theorem \ref{Mainthm1}, we made the following conjecture. 

\begin{conjecture}

	The class of $2$-connected $K_{5}$-free flat graphs without isolated triangles 
	has only clique roots. In particular, this 
	class has always the clique root $r=-1$ of multiplicity $2$.
	
\end{conjecture}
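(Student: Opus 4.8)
The plan is to run the quartic analogue of the argument used for Theorem \ref{Mainthm1}. First I note that a flat graph is automatically $K_{5}$-free, since a $K_{5}$ would force three triangles onto each of its edges; hence $\omega(G)\le 4$ and
\[
C(G,x)=1+nx+mx^{2}+tx^{3}+qx^{4},
\]
where $n,m,t,q$ count the vertices, edges, triangles and copies of $K_{4}$. Because the constant term is $1$ and the leading term is $qx^{4}$, the assertion ``$r=-1$ is a clique root of multiplicity $2$'' is equivalent to the pair of identities $C(G,-1)=0$ and $\tfrac{d}{dx}C(G,-1)=0$; once these hold one has $C(G,x)=(1+x)^{2}\bigl(qx^{2}+(n-2)x+1\bigr)$, and the remaining two roots are real precisely when $(n-2)^{2}\ge 4q$. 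So the target splits into two vanishing identities plus one discriminant inequality.

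The decisive structural step, which I would isolate as a lemma, concerns the neighbourhoods. In any flat graph each $G[N(v)]$ has maximum degree at most $2$ (a vertex of degree $3$ inside $N(v)$ would place an incident edge of $G$ in three triangles), so $G[N(v)]$ is a disjoint union of paths and cycles. The absence of isolated triangles forces every triangle, hence every edge of every $G[N(v)]$, to sit in a $K_{4}$, i.e.\ in a $3$-cycle of $G[N(v)]$; since a path or a longer cycle has edges in no $3$-cycle, each $G[N(v)]$ must be a disjoint union of triangles and isolated vertices. Writing $p_{v}$ for the number of triangle components and $s_{v}$ for the number of isolated vertices of $G[N(v)]$, the disjoint-union rule for clique polynomials gives $C\bigl(G[N(v)],x\bigr)=p_{v}(1+x)^{3}+s_{v}(1+x)-(p_{v}+s_{v}-1)$. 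Two counts come for free: every triangle lies in a unique $K_{4}$ and every $K_{4}$ carries four triangles, so $t=4q$ and $\sum_{v}p_{v}=4q$, while $\sum_{v}s_{v}=2m_{0}$, where $m_{0}$ is the number of edges lying in no triangle.

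Feeding these into Lemma \ref{Derivat1}, $\tfrac{d}{dx}C(G,x)=\sum_{v}C\bigl(G[N(v)],x\bigr)$, I get the clean evaluations
\[
\tfrac{d}{dx}C(G,-1)=n-\sum_{v}(p_{v}+s_{v})=n-4q-2m_{0},\qquad C(G,-1)=1-n+3q+m_{0}.
\]
Exactly as in Theorem \ref{Mainthm1}, one would also check $\tfrac{d}{dx}C(G,-\tfrac12)\le 0$ while $\tfrac{d}{dx}C(G,0)=n>0$, so the intermediate value theorem plants a real root of the cubic $\tfrac{d}{dx}C(G,x)$; a Rolle/interlacing step would then propagate real-rootedness from the derivative to $C(G,x)$, and the discriminant bound $(n-2)^{2}\ge 4q$ (which the forced relation $n=2q+2$ makes available) would close the argument.

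The hard part will be the simultaneous vanishing of the two evaluations above, and here I expect genuine trouble rather than a routine calculation. Demanding both $n=4q+2m_{0}$ and $n=1+3q+m_{0}$ forces $q+m_{0}=1$, an identity that $2$-connectivity and the absence of isolated triangles do not guarantee: the graph built from two disjoint copies of $K_{4}$ by joining them with two independent edges is $2$-connected, flat, and free of isolated triangles, yet has $q=m_{0}=2$ and $C(G,-1)=1$ (and in fact only two of its four clique roots are real). The conceptual reason is that $C(G,-1)=1-\chi\bigl(X(G)\bigr)$, where $X(G)$ is the clique complex of $G$, so the double root at $-1$ is an acyclicity statement about $X(G)$ together with its vertex links, and $2$-connectivity of $G$ does not make $X(G)$ acyclic. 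I would therefore prove the factorization only under a strengthened hypothesis that does force this acyclicity --- e.g.\ that $G$ carries a planar triangulation structure, so that Euler's identity \ref{Euler1} gives $f=t+1$ and its link-analogue gives the second relation, or equivalently that $X(G)$ be contractible --- and isolating the minimal such strengthening is, in my view, the real content of the problem.
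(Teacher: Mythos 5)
The statement you were asked to prove is one of the paper's \emph{conjectures}: the paper supplies no proof of it, so there is nothing on the paper's side to compare your argument with. What you have written is, in substance, a correct refutation of the conjecture as stated, and the key steps check out. Flatness does force every $G[N(v)]$ to have maximum degree at most two; the absence of isolated triangles then forces each neighbourhood to be a disjoint union of triangles and isolated vertices; and your bookkeeping is right, because flatness makes the $K_4$ containing a given triangle unique and prevents an edge from lying in two $K_4$'s, whence $t=4q$, $m=m_0+6q$, $\sum_v p_v=4q$, $\sum_v s_v=2m_0$. This gives $C(G,-1)=1-n+m_0+3q$ and $\tfrac{d}{dx}C(G,-1)=n-4q-2m_0$, so a double root at $-1$ forces $q+m_0=1$, which the hypotheses do not deliver. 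Your counterexample is valid: for two disjoint copies of $K_4$ joined by two independent edges, the graph is $2$-connected, flat (the two bridges lie in no triangle, all other edges in exactly two), has no isolated triangles (every triangle sits inside a $K_4$, so its triangle value is $1$), and $C(G,x)=1+8x+14x^2+8x^3+2x^4$, so $C(G,-1)=1\neq 0$; moreover $\tfrac{d}{dx}C(G,x)=4(2x^3+6x^2+7x+2)$ has negative discriminant, hence a single real root, so the quartic has exactly two real roots (both in $(-1,0)$) and two non-real ones. Thus \emph{both} assertions of the conjecture fail, and your Euler-characteristic diagnosis $C(G,-1)=1-\chi(X(G))$ correctly explains why: the hypotheses do not control the topology of the clique complex. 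The obstruction is robust --- even adding ``no isolated edges'' does not rescue the statement, as a necklace of $k\geq 3$ copies of $K_4$ with consecutive copies glued at a vertex is $2$-connected, flat, free of isolated edges and triangles, and has $n=3k$, $m=6k$, $t=4k$, $q=k$, hence again $C(G,-1)=1$.

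Two peripheral caveats, neither affecting your conclusion. First, the inequality $\tfrac{d}{dx}C(G,-\tfrac12)\leq 0$ that you import from Theorem \ref{Mainthm1} does not carry over to this class: for $G=K_4$ one has $\tfrac{d}{dx}C(G,-\tfrac12)=4(\tfrac12)^3=\tfrac12>0$, and in any case the derivative of a quartic is a cubic, which always has a real root, so the intermediate-value step is vacuous here. Second, the suggested ``Rolle/interlacing step propagating real-rootedness from the derivative to $C(G,x)$'' runs in the wrong direction --- real-rootedness passes from a polynomial to its derivative, not conversely --- but this is moot, since once the factorization $C(G,x)=(1+x)^2\bigl(qx^2+(n-2)x+1\bigr)$ is available the discriminant bound $(n-2)^2\geq 4q$ finishes the job directly, exactly as you say. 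Your closing suggestion, that the conjecture should be repaired by a hypothesis forcing $\chi(X(G))=1$ (for instance contractibility of the clique complex, in the way planarity and Lemma \ref{Euler1} deliver $C(G,-1)=0$ in Theorem \ref{Mainthm1}), is the right way to salvage the statement.
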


A $i$-clique is called \emph{odd} clique if $i$ is odd and otherwise is called an \emph{even} clique.  
We can define an interesting parameter in graph theory as the 
absolute value of the difference between the total number of odd and even cliques. 
More precisely, we define 
\begin{equation}
D_{clique}(G) = 
\Big\vert 
\sum_{i:~i~is~odd}c_{i}(G) - \sum_{i:~i~is~even}c_{i}(G)
\Big\vert
. 
\end{equation}

As we can see from Theorem \ref{Mainthm1} and Theorem \ref{Mainthm2}, for the subclasses of flat graphs $G$ 
mentioned in those theorems, we conclude that
$
D_{clique}(G) = 1
$
. 
Thus, the following question naturally arises. 

\begin{question} 
	For which classes of graphs 
	there exits a positive integer $k$ such that 
	$
	D_{clique}(G) \leq k
	$
	?
\end{question} 

We note that when $k=1$, we call the graph $G$ a \emph{balanced} graph.
We believe that the following conjecture is true.

\begin{conjecture}
	Any flat graph is a balanced graph.	
	
\end{conjecture}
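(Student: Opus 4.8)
The plan rests on the cubic shape $C(G,x)=1+nx+mx^{2}+tx^{3}$ forced by $K_{4}$-freeness, together with the derivative identity of Lemma~\ref{Derivat1}. I would first prove the structural lemma that makes flatness bite: since every edge $\{u,v\}$ lies in at most two triangles, $|N_{G}(u)\cap N_{G}(v)|\le 2$, so every vertex of the induced graph $G[N_{G}(v)]$ has degree at most two; hence each neighborhood is a disjoint union of paths and cycles, the cycles having length at least $4$ because $G$ is $K_{4}$-free. Consequently $C(G[N_{G}(v)],x)=1+d_{v}x+t_{v}x^{2}$, where $d_{v}=\deg(v)$ and $t_{v}$ is the number of triangles on $v$, and the union structure forces $t_{v}\le d_{v}$. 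A useful by-product is that each neighborhood quadratic is itself real-rooted, its discriminant being $d_{v}^{2}-4t_{v}\ge d_{v}(d_{v}-4)\ge 0$ when $d_{v}\ge 4$ and nonnegative by inspection when $d_{v}\le 3$ (the sole obstruction $d_{v}=t_{v}=3$ would be a $K_{4}$).

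Summing this over $v$ recovers $C'(G,x)=n+2mx+3tx^{2}$, and I would then reduce real-rootedness of the cubic to the standard fact that a cubic with positive leading coefficient has three real roots exactly when its local maximum is nonnegative and its local minimum is nonpositive. The clean device here is the covariant $R(x):=3C(G,x)-xC'(G,x)=mx^{2}+2nx+3$, for which $R(x_{0})=3C(G,x_{0})$ at every critical point $x_{0}$; the two sign conditions thus become $R(x_{1})\ge 0\ge R(x_{2})$ at the roots $x_{1}<x_{2}$ of $C'$, i.e. the interlacing $x_{1}\le \rho_{-}\le x_{2}\le \rho_{+}$ of the critical points with the roots $\rho_{\pm}$ of $R$. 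Establishing this interlacing is equivalent to showing the cubic discriminant $18tmn-4m^{3}+m^{2}n^{2}-4tn^{3}-27t^{2}$ is nonnegative. The graphs with no isolated edge, and more generally those in which no isolated edge lies in an induced cycle, are already handled by Theorems~\ref{Mainthm1} and \ref{Mainthm2}, so the genuinely new content is a flat $K_{4}$-free graph carrying an isolated edge inside an induced cycle.

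In those earlier theorems the evaluation at $-\tfrac12$ does the locating work: each summand $1-\tfrac{d_{v}}{2}+\tfrac{t_{v}}{4}$ is nonpositive once $d_{v}\ge 4$, and a short case check shows the only vertices forcing it positive are pendants ($d_{v}=1$, contributing $\tfrac12$) and degree-two \emph{triangle tips} ($d_{v}=2,\ t_{v}=1$, contributing $\tfrac14$). This exposes the obstacle, which is twofold. Even when $C'(G,-\tfrac12)\le 0$ holds, it only places a critical point in $[-\tfrac12,0)$ and says nothing about the sign of $C$ there, so it does not by itself deliver $R(x_{1})\ge 0\ge R(x_{2})$; and the pendant/triangle-tip count can make $C'(G,-\tfrac12)$ strictly positive, so no fixed evaluation point works in general. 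I would therefore pursue two routes. The first keeps the covariant reduction and attempts to force the interlacing of $C'$ and $R$ directly from the counts $\sum_{v}d_{v}=2m$, $\sum_{v}t_{v}=3t$ and $t_{v}\le d_{v}$, i.e. to prove the discriminant inequality above for exactly the realizable triples $(n,m,t)$. The second is an induction on $|V(G)|$ through the vertex-recurrence (\ref{VerRecu1}): delete a vertex incident to the offending isolated edge, so that $C(G-v,x)$ is real-rooted by induction while $x\,C(G[N_{G}(v)],x)$ is $x$ times a real-rooted quadratic, and then argue that the two summands admit a common interlacing, which keeps their sum real-rooted. I expect this last step—controlling the discriminant uniformly, equivalently producing the common interlacing—to be the real difficulty, since sums of real-rooted polynomials are generically not real-rooted and it is precisely the structure of flat graphs that must exclude the bad triples.
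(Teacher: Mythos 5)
The statement you have been asked to prove is posed in the paper as an \emph{open conjecture}; the paper contains no proof of it, so your proposal must stand on its own, and it does not, for a reason that precedes all of the analytic work: you are proving the wrong property of the wrong class of graphs. The conjecture asserts that every flat graph $G$ satisfies $D_{clique}(G)\le 1$, which under the paper's convention (the even sum excludes $c_{0}$, as one sees from the paper's remark that the graphs of Theorems \ref{Mainthm1} and \ref{Mainthm2} have $D_{clique}(G)=1$) is the arithmetic condition $D_{clique}(G)=\vert 1-C(G,-1)\vert\le 1$, i.e. $0\le C(G,-1)\le 2$. Your plan instead aims at real-rootedness of $C(G,x)$ for \emph{$K_{4}$-free} flat graphs, which is Theorem \ref{MainThm}, not the conjecture. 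The conjecture carries no $K_{4}$-freeness hypothesis, and flatness does not imply it: $K_{4}$ itself is flat, since each of its edges lies in exactly two triangles, so your opening premise that $C(G,x)=1+nx+mx^{2}+tx^{3}$ is already unavailable. Moreover real-rootedness neither implies nor is implied by balance, so even a complete proof of Theorem \ref{MainThm} --- which your proposal is not, since you concede the discriminant/common-interlacing step remains open --- would say nothing about $D_{clique}(G)$.

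This gap is not repairable, because the conjecture as stated is false, and the counterexamples live exactly in the blind spot of your approach. The octahedron $K_{2,2,2}$ is flat in the strongest possible sense (every edge lies in exactly two triangles), connected, $K_{4}$-free, and has no isolated edges; its clique vector is $(c_{0},c_{1},c_{2},c_{3})=(1,6,12,8)$, so
\begin{equation}
C(K_{2,2,2},x)=1+6x+12x^{2}+8x^{3}=(1+2x)^{3},
\end{equation}
which is as real-rooted as a cubic can be, and yet $D_{clique}(K_{2,2,2})=\vert(6+8)-12\vert=2>1$, equivalently $C(K_{2,2,2},-1)=-1\ne 0$. This single example shows that no argument can convert information about the roots of $C(G,x)$ into the required information about its value at $-1$; it also refutes Theorem \ref{Mainthm1}, on which your proposal leans, because the Euler-formula step $f=t+1$ in its proof fails when every face of the planar embedding is a triangle (for the octahedron $f=t=8$). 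If one reads the paper's definition of flatness literally, triangle-free graphs are vacuously flat, and then $K_{3,3}$ (with $D_{clique}=\vert 6-9\vert=3$) or a disjoint union of $k\ge 2$ edges (with $D_{clique}=k$) are even simpler counterexamples under either convention for $c_{0}$. The correct outcome of your investigation should therefore have been a disproof, not a proof strategy.
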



\end{document}